\numberwithin{equation}{section}
\theoremstyle{definition}
\newtheorem{Thm}{Theorem}[section]
\newtheorem{Prop}[Thm]{Proposition}
\newtheorem{Cor}[Thm]{Corollary}
\newtheorem{Rmk}[Thm]{Remark}
\newtheorem*{main}{Main Theorem}
\def\imod#1{\allowbreak\mkern5mu{\operator@font mod}\,\,#1}
\begin{document}

\title[Rationality]{Rationality of the Petersson Inner Product of Cohen's Kernels}
\author{Yuanyi You and Yichao Zhang$^*$}
\address{Department of Mathematics, Harbin Institute of Technology, Harbin 150001, P. R. China}
\email{1546711213@qq.com}
\address{Institute for Advanced Study in Mathematics of HIT and School of Mathematics, Harbin Institute of Technology, Harbin 150001, P. R. China}
\email{yichao.zhang@hit.edu.cn}
\date{}
\subjclass[2010]{Primary: 11F11, 11F30, 11F67}
\keywords{Cohen's kernel, Rationality, Double Eisenstein series, Fourier coefficient.}
\thanks{$^{\star}$ partially supported by a grant of National Natural Science Foundation of China (no. 11871175)}

\begin{abstract} By explicitly calculating and then analytically continuing the Fourier expansion of the twisted double Eisenstein series $E_{s,k-s}^{*}(z,w; 1/2)$ of Diamantis and O'Sullivan, we prove a formula of the Petersson inner product of Cohen's kernel and one of  its twists, and obtain a rationality result. This extends a result of Kohnen and Zagier.
\end{abstract}

\maketitle

\newcommand{\Z}{{\mathbb Z}} 
\newcommand{\Q}{{\mathbb Q}} 

\section{Introduction and Statement of the Main Theorem}
Let $\mathcal{S}_k$ be the complex vector space of holomorphic cusp forms of even integral weight $k$ for $\textrm{SL}_2(\mathbb{Z})$, and let $\mathcal{B}_k$ be the basis of normalized Hecke eigenforms in $\mathcal{S}_k$. For $ f(\tau)=\sum_{n\ge1}a_f(n)e^{2\pi i n \tau}\in\mathcal{S}_k$, denote its $L$-function by $L(f,s)=\sum_{n\ge1}\frac{a_f(n)}{n^s}$ and its completed $L$-function by
\[
L^*(f,s)=(2\pi)^{-s}\Gamma(s)L(f,s).
\]
It is well-known that $L^*(f,s)$ is entire and satisfies the functional equation $L^*(f,k-s)=(-1)^{k/2}L^*(f,s)$. For integers $n$ with $0\le n\le k-2$, the $n$-th period of $f$ is
\[
r_n(f):=L^*(f,n+1).
\]
One of the most important results on these periods is Manin's Periods Theorem \cite{manin}, which concerns the algebraicity of such periods of normalized Hecke eigenforms.

For each $0\le n\le k-2$, since $f\mapsto r_n(f)$ is linear, with the Petersson inner product, there must exist $R_n\in \mathcal S_k$ such that $\langle f, R_n\rangle=r_n(f)$ for all $f\in \mathcal S_k$. Kohnen and Zagier \cite{Kohnen1983} showed that, for $m,n$ of opposite parity, $\langle R_m, R_n\rangle$ is a rational number by proving an explicit formula. We shall prove a twisted version of such formula and then obtain the corresponding rationality.

To state out main theorem, let $\mathcal D_k(z,s)$, $s\in\mathbb{C}$, be the unique cusp form that satisfies that
\[
\langle{\mathcal D}_k(z,s), f\rangle=L^*(\overline{f},s)
\]
for all $f\in \mathcal S_k$ (see \cite{2010Kernels}). Clearly, $R_n={\mathcal D}_k(\cdot,n+1 )$. The Cohen kernel is
\[
\mathcal{C}_k(z,s)=\frac{1}{2}\sum_{\gamma\in \textrm{SL}_2(\mathbb Z)}(\gamma z)^{-s} j(\gamma,z)^{-k},
\]
and
\[
{\mathcal{C}}_{k}(z, s)=2^{2-k}(-1)^{k/2}\pi e^{-\pi i s/2}\frac{\Gamma(k-1)}{\Gamma(s)\Gamma(k-s)}{\mathcal D}_k(z,s).
\]
The twisted Cohen kernel (see \cite{2013Kernels}), for $p/q\in\mathbb{Q}$, is defined to be
\[
\mathcal{C}_k(z,s; p/q):=\frac{1}{2}\sum_{\gamma\in\Gamma}(\gamma z+p/q)^{-s} j(\gamma,z)^{-k},
\]
and $\mathcal{C}_k(z,s; 0)=\mathcal{C}_k(z,s)$.

\begin{main}
For integers $2\leq m,n\leq k-2$ of opposite parity,
\[
\pi^{-2} e^{\pi i (m-n)/2}\langle{\mathcal{C}}_{k}(z, m; 1/2),{\mathcal{C}}_{k}(z, n)\rangle\in \mathbb Q.
\]
\end{main}

Note that the extra factor $\pi^{-2} e^{\pi i (m-n)/2}$ comes from the difference of $\mathcal{C}_k(z,s)$ and $\mathcal{D}_k(z,s)$ and that of their twisted counterparts.

We follow the lines in \cite{CHOIE2020} and compute the Fourier expansion of the twisted double Eisenstein series in Section 2. In Section 3, we carry out its analytic continuation, from which we derive the desired formula and the rationality in the final section by looking at the first Fourier coefficient.

\medskip

\noindent {\bf Acknowledgement}:
The second author was supported by NSFC11871175. The authors are very grateful to the referee for providing many valuable comments on a previous version of this paper and especially for pointing out an error in the formula of Proposition 4.1.

\bigskip

\section{Fourier Expansion of the Twisted double Eisenstein series}
For $p/q\in\mathbb{Q}$ with $q>0$ and even integer $k\geq 6$, Diamantis and O'Sullivan \cite{2013Kernels} introduced
the twisted double Eisenstein series $E_{s,k-s}(z,w; p/q)$ by
\begin{equation}\label{Formula-2.1}
\begin{aligned}
&\zeta(1-w+s)\zeta(1-w+k-s)E_{s,k-s}(z,w; p/q)\\
=&\sum_{a,b,c,d\in \mathbb Z, ad-bc>0}(ad-bc)^{w-1}\left(\frac{az+b}{cz+d}+\frac{p}{q}\right)^{-s}(cz+d)^{-k}.\\
\end{aligned}
\end{equation}
In \cite{2013Kernels}, it is shown that $E_{s,k-s}(z,w; p/q)$ converges absolutely and uniformly on compact subsets in $\mathcal D$, where
\begin{align*}
\mathcal D=\{2<\text{Re}(s)<k-2,\ \  \text{Re}(w)<\min\{ \text{Re}(s)-1, k-\text{Re}(s)-1\}         \}.
\end{align*}
The completed twisted double Eisenstein series is defined by
\begin{equation}\label{Formula-2.2}
\begin{aligned}
&E_{s,k-s}^{*}\left(z,w; p/q\right)\\=&\frac{e^{\pi i s/2}\Gamma(s)\Gamma(k-s)\Gamma(k-w)\zeta(1-w+s)\zeta(1-w+k-s)}{2^{3-w}\pi^{k+1-w}\Gamma(k-1)}E_{s,k-s}\left(z,w; p/q\right).
\end{aligned}
\end{equation}
They showed that
 $E_{s,k-s}^{*}(z,w; p/q)$ has an analytic continuation to all $s, w\in \mathbb C$, as a function of $z$ it always belongs to $S_{k}$, and for any $f\in {\mathcal B}_k$
\[
 \langle E_{s,k-s}^{*}(\cdot,w; p/q), f\rangle =L^*(f,k-s;p/q)L^*(f,k-w).
\]
It follows that
\begin{equation}\label{Formula-2.3}
\begin{aligned}
&E_{s,k-s}^*(z,w; p/q)
=\sum_{f\in {\mathcal B}_k}\frac{L^*(f,k-s;p/q)L^*(f,k-w)}{\langle f,f\rangle}f(z),
\end{aligned}
\end{equation}
so that the left-hand side has analytic continuation to all of $(s,w)\in{\mathbb C}^2$. Moreover, it satisfies two functional equations
\begin{align*}
E_{s,k-s}^{*}(z,k-w; p/q)&=(-1)^{k/2}E_{s,k-s}^{*}(z,w; p/q),\\
q^sE_{k-s, s}^{*}(z,w; p/q)&=(-1)^{k/2}q^{k-s}E_{s,k-s}^{*}(z,w; -p'/q),
\end{align*}
for $p'p\equiv 1\mod q$.
\begin{Prop}\label{Prop-2.1}(Fourier expansion) Let $(s, w)\in \mathcal D$, then $E_{s,k-s}^{*}(z,w; 1/2)$ has the Fourier expansion
\[
E_{s,k-s}^{*}\left(z,w; 1/2\right)=\frac{\Gamma(s)\Gamma(k-s)\Gamma(k-w)}{2^{2-s-w}\pi^{k+1-w}\Gamma(k-1)}\sum_{m=1}^{\infty}c(m)e^{2\pi i mz},
\]
where the $m$-th Fourier coefficient $c(m)=c_m(s,w)$ is equal to the following absolute convergent series
\begin{align*}
&\frac{(2\pi)^{s}}{2^s\Gamma (s)}m^{s-1}\zeta(k-s-w+1)\sum_{2a|m}a^{w-s}+\frac{(-1)^{\frac{k}{2}}(2\pi)^{k-s}}{2^{k-s}\Gamma (k-s)}m^{k-s-1}\zeta(s-w+1)\sum_{2a|m }a^{s+w-k}\\
+&\frac{(2\pi)^{k-w}\Gamma(s+w-k)}{2^{s-1}\Gamma (s)}m^{s-1}\cos(\pi(s+w-k)/2)\zeta\left(s+w-k,1/2\right)\sum_{a\mid m, 2a\nmid m}a^{w-s}\\
+&(-1)^{\frac{k}{2}}\frac{(2\pi)^{k-w}\Gamma(w-s)}{2^{k-s-1}\Gamma (k-s)}m^{k-s-1}\cos(\pi(w-s)/2)\zeta\left(w-s,1/2\right)\sum_{a\mid m, 2a\nmid m }a^{s+w-k}\\
+&\frac{(-1)^{\frac{k}{2}}(2\pi)^{k} m^{k-1}}{2^s\Gamma(s)\Gamma (k-s)}\sum_{a+c/2>0,c>0, (a,c)=1}c^{s-k}(a+c/2)^{-s}\sum_{n>0}n^{w-1}\sum_{r|m}r^{w-k}\\
&\ \ \times \left(e^{\pi i s/2}e^{2\pi i \frac{m}{r}\frac{na'}{c}}{{}_1{f}_1}\left(s,k;-\frac{2\pi i m n }{rc(a+c/2)}\right)+e^{-\pi i s/2}e^{-2\pi i \frac{m}{r}\frac{na'}{c}}{{}_1{f}_1}\left(s,k;\frac{2\pi i m n }{rc(a+c/2)}\right)\right).
\end{align*}
Here all the sums are over positive integers,  $a'\equiv a^{-1} \ (\text{mod}\ c)$, and
\[
{{}_1{f}_1}(\alpha,\beta;z)=\frac{\Gamma(\alpha)\Gamma(\beta-\alpha)}{\Gamma(\beta)}{{}_1{F}_1}(\alpha, \beta; z)
\]
with ${{}_1{F}_1}(\alpha, \beta; z)$ being Kummer's degenerate hypergeometric function.
\end{Prop}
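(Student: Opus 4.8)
The plan is to substitute $p/q=1/2$ into the defining series, expand it as a Fourier series in the cusp, and match coefficients. Combining \eqref{Formula-2.1} and \eqref{Formula-2.2}, the two Dirichlet zeta factors cancel, so up to the elementary constant $2^{-1-s}e^{\pi i s/2}$ it suffices to expand the raw matrix sum
\[
\sum_{ad-bc>0}(ad-bc)^{w-1}\Big(\tfrac{az+b}{cz+d}+\tfrac12\Big)^{-s}(cz+d)^{-k}.
\]
The first move is to clear the half-integer shift by writing $\frac{az+b}{cz+d}+\frac12=\frac{(2a+c)z+(2b+d)}{2(cz+d)}$, which turns each summand into $2^{s}(ad-bc)^{w-1}(cz+d)^{s-k}\big((2a+c)z+(2b+d)\big)^{-s}$: a product of powers of the two linear forms $cz+d$ and $(2a+c)z+(2b+d)$, whose cross-determinant is exactly $2(ad-bc)$. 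I would then partition the matrices according to whether the $z$-coefficients of these two forms vanish, into the families $c=0$, $2a+c=0$, and $c\neq 0\neq 2a+c$; the first two are degenerate and produce the elementary (zeta) terms, while the third produces the hypergeometric terms.

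For the family $c=0$ one has $ad=ad-bc>0$, and after clearing a power of $d$ the summand becomes $a^{w-1}d^{s+w-k-1}(az+b+d/2)^{-s}$; summing over $b\in\mathbb Z$ by the Lipschitz summation formula produces the modes $e^{2\pi i m z}$ with $a\mid m$, weighted by $(m/a)^{s-1}$ and a phase $e^{\pi i(m/a)d}$. The parity of $m/a$ splits the divisor sum into $2a\mid m$ and $a\mid m,\,2a\nmid m$, while the residual sum over $d>0$ is $\sum_{d}d^{s+w-k-1}(\pm1)^{d}$, equal to $\zeta(k-s-w+1)$ in the even case and to $-\eta(k-s-w+1)$ in the odd case; the Hurwitz functional equation rewrites the latter as $\Gamma(s+w-k)\cos(\pi(s+w-k)/2)\zeta(s+w-k,1/2)$ times the appropriate power of $2\pi$, giving precisely lines one and three. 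Running the two signs of $(a,d)$ supplies the missing factor of two and combines $e^{\pm\pi i s/2}$ into the stated constants. The mirror family $2a+c=0$ is handled identically with the roles of the two linear forms interchanged: the sum over $d$ now runs against $(cz+d)^{s-k}$, producing $m^{k-s-1}$ in place of $m^{s-1}$, the factor $(-1)^{k/2}$ from the orientation, and lines two and four.

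The generic family $c\neq 0\neq 2a+c$ is the heart of the argument, and here I would follow the Fourier expansion of a Poincaré series as in \cite{CHOIE2020}. Using the symmetry $(a,b,c,d)\mapsto(-a,-b,-c,-d)$ I restrict to $c>0$ and, after pulling out the gcd of $(a,c)$, to coprime $(a,c)$ with $2a+c>0$, the two sign choices ultimately producing the two conjugate summands carrying $e^{\pm\pi i s/2}$. The common divisors extracted from the matrix, together with the free determinant scaling, reassemble into the arithmetic factors $\sum_{n>0}n^{w-1}$ and $\sum_{r\mid m}r^{w-k}$. For fixed primitive $(a,c)$ I unfold the remaining integer translation, turning $\sum\int_0^1$ into a single integral
\[
\int_{\mathbb R}(cz+d)^{s-k}\big((2a+c)z+\cdot\big)^{-s}e^{-2\pi i m x}\,dx
\]
over the real part; summing over residues modulo $c$ introduces the exponential phase $e^{2\pi i(m/r)(na'/c)}$ with $a'\equiv a^{-1}\pmod c$, exactly as Kloosterman-type sums arise classically. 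This inner Fourier integral of a product of two complex powers of distinct linear forms is a standard confluent integral that evaluates to Kummer's ${}_1F_1$, equivalently the normalized ${}_1f_1(s,k;\cdot)$ with argument $-2\pi i mn/\big(rc(a+c/2)\big)$, yielding the final two lines.

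Finally I would collect the constants, check them against the prefactor $\Gamma(s)\Gamma(k-s)\Gamma(k-w)/(2^{2-s-w}\pi^{k+1-w}\Gamma(k-1))$, and invoke absolute convergence in $\mathcal D$ (guaranteed there by Diamantis--O'Sullivan, and consistent with $\mathrm{Re}(k-s-w+1)>2$ for the degenerate $d$-sums) to justify every rearrangement and the use of Lipschitz summation. I expect the main obstacle to be the generic family: tracking the gcd and coprimality bookkeeping so that the leftover sums reassemble \emph{exactly} as $\sum_{n>0}n^{w-1}\sum_{r\mid m}r^{w-k}$, pinning down the phase $e^{2\pi i(m/r)(na'/c)}$ together with the argument of the confluent function, and identifying the one-dimensional integral with ${}_1f_1$ carrying precisely the parameters $(s,k)$. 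By comparison the two degenerate families, though they demand careful sign and parity bookkeeping, are routine applications of Lipschitz summation and the Hurwitz functional equation.
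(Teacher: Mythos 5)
Your proposal follows essentially the same route as the paper's proof: the identical decomposition into the families $c=0$, $2a+c=0$ (i.e.\ $a+c/2=0$), and the generic family split by the sign of $(a+c/2)c$, with Lipschitz summation plus the Hurwitz--Lerch functional equation (DLMF 25.13.2) yielding the four zeta lines, and the parametrization of $M_n$, unfolding of the integer translates into a line integral with Kloosterman-type phases, and the integral representation of ${}_1F_1(s,k;\cdot)$ yielding the confluent terms, all rearrangements justified by Diamantis--O'Sullivan's absolute convergence on $\mathcal D$. The only differences are cosmetic bookkeeping (e.g.\ the paper sums over $t\in\mathbb Z/r\mathbb Z$ rather than residues mod $c$, the inverse $a'\equiv a^{-1}\ (\mathrm{mod}\ c)$ entering through $d_0$, and then reindexes $n\mapsto rn$ to reach $\sum_{n>0}n^{w-1}\sum_{r\mid m}r^{w-k}$), so your plan is correct and matches the published argument.
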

\begin{proof}
The proof is similar to that of Proposition 4.2 in \cite{CHOIE2020} and we omit some details in the following.

From \eqref{Formula-2.1} and \eqref{Formula-2.2}, we have
\begin{equation}\label{Formula-2.4}
\begin{aligned}
\frac{e^{\pi i s/2}}{2^{1+s}}\sum_{a,b,c,d\in \mathbb Z, ad-bc>0}(ad-bc)^{w-1}\left(\frac{az+b}{cz+d}+\frac{1}{2}\right)^{-s}(cz+d)^{-k}=
\sum_{m \ge 1}c(m)e^{2\pi i mz}.
\end{aligned}
\end{equation}
To compute $c(m)$, we split the summation of left-hand side of \eqref{Formula-2.4} into four cases and consider the contribution of each to $c(m)$.

(\textbf{1}) For the elements $\gamma=\left[ \begin{matrix}
   a_{\gamma} & b_{\gamma}  \\
   c_{\gamma} & d_{\gamma} \\
\end{matrix} \right] \in  M_ n $
with $c_{\gamma}=0$, $n\geq 1$, the corresponding sub-series is
\begin{align*}
I_1=&\frac{e^{\pi i s/2}}{2^{1+s}}\sum_{n>0}n^{w-1}\sum_{a,b,c,d\in \mathbb Z, ad-bc=n,c=0}\left(\frac{az+b}{cz+d}+\frac{1}{2}\right)^{-s}(cz+d)^{-k}\\
=&2^{-s}e^{\pi i s/2}\sum_{n>0}n^{w-1}\sum_{a,d\in \mathbb Z, ad=n, a>0}d^{s-k}\sum_{b\in \mathbb Z}\left(az+\frac{d}{2}+b\right)^{-s}.
\end{align*}
Applying Lipschitz's formula
\begin{equation}\label{Formula-2.5}
\begin{aligned}
\sum_{n\in\mathbb Z }(\tau+n)^{-s}=\frac{e^{-\pi i s/2}(2 \pi)^s}{\Gamma(s)}\sum_{n\ge1}n^{s-1}e^{2\pi i n \tau}, \ \ \text{Im}(\tau)>0, \text{Re}(s)>1,
\end{aligned}
\end{equation}
to the sum over $b$, we have
\[
I_1=\frac{(2\pi)^{s}}{2^s\Gamma (s)}\sum_{n>0}n^{w-1}\sum_{a,d\in \mathbb Z, ad=n, a>0}d^{s-k}\sum_{r>0}r^{s-1}e^{2\pi i rd/2}e^{2\pi i raz}.
\]
Then the contribution of $I_1$ in the $m$-th Fourier coefficient is equal to
\begin{align*}
\nonumber c(m)_1=&\frac{(2\pi)^{s}}{2^s\Gamma (s)}\sum_{n>0}n^{w-1}\sum_{a,d\in \mathbb Z, ad=n, a>0}d^{s-k}\sum_{ra=m}r^{s-1}e^{2\pi i rd/2}\\
\nonumber=&\frac{(2\pi)^{s}}{2^s\Gamma (s)}\sum_{n>0}n^{w-1}\sum_{ ad=n, a|m}d^{s-k}\left(\frac{m}{a}\right)^{s-1}e^{2\pi i md/2a}\\
\nonumber=&\frac{(2\pi)^{s}}{2^s\Gamma (s)}\sum_{a|m}\sum_{d\ge1}(ad)^{w-1}d^{s-k}\left(\frac{m}{a}\right)^{s-1}e^{2\pi i md/2a}\\
\nonumber=&\frac{(2\pi)^{s}}{2^s\Gamma (s)}m^{s-1}\sum_{a|m}a^{w-s}F(k-s-w+1, m/2a ),
\end{align*}
where $F(s,a)=\sum_{n=1}^\infty \frac{e^{2\pi ian}}{n^s}$.
By its connection with Hurwitz zeta function (see Formula 25.13.2 of \cite{Olver2010NIST}),
\begin{align*}
c(m)_1=&\frac{(2\pi)^{s}}{2^s\Gamma (s)}m^{s-1}\zeta(k-s-w+1)\sum_{2a|m}a^{w-s}\\
&+\frac{(2\pi)^{k-w}\Gamma(s+w-k)}{2^{s-1}\Gamma (s)}m^{s-1}\cos(\pi(s+w-k)/2)\zeta\left(s+w-k,1/2\right)\sum_{a\mid m, 2a\nmid m}a^{w-s}.
\end{align*}

(\textbf{2})  For the elements $\gamma=\left[ \begin{matrix}
   a_{\gamma} & b_{\gamma}  \\
   c_{\gamma} & d_{\gamma} \\
\end{matrix} \right] \in  M_ n $
with $a_{\gamma}+c_{\gamma}/2=0$, $n\geq 1$, the corresponding subseries of (\ref{Formula-2.4}) is given by
\begin{align*}
I_2=&\frac{e^{\pi i s/2}}{2^{1+s}}\sum_{n>0}n^{w-1}\sum_{a,b,c,d\in \mathbb Z, ad-bc=n,a+c/2=0}\left(\frac{az+b}{cz+d}+\frac{1}{2}\right)^{-s}(cz+d)^{-k}\\
=&\frac{e^{\pi i s/2}}{2^{s}}\sum_{n>0}n^{w-1}\sum_{a,b,c,d\in \mathbb Z, ad-bc=n,a+c/2=0, a>0}\left(\frac{b+d/2}{cz+d}\right)^{-s}(cz+d)^{-k}.
\end{align*}
Note that the summation in $I_1$ and that in $I_2$ do not overlap, since otherwise $c_{\gamma}=0$ and $a_{\gamma}+c_{\gamma}/2=0$ and hence $a_{\gamma}d_{\gamma }-b_{\gamma}c_{\gamma}=0$. In particular, $c_{\gamma}, a_{\gamma}\neq 0$. It is clear that the conditions on $\gamma$ in the last series are equivalent to
\[a>0, \quad a\mid n,\quad  c=-2a,\quad b\in\mathbb{Z},\quad d = \frac{n}{a}-2b.\]
Therefore
\begin{align*}
I_2=&\frac{e^{\pi i s/2}}{2^{s}}\sum_{n>0}n^{w-1}\sum_{a>0, a\mid n }\left(\frac{n}{2a}\right)^{-s}2^{s-k}e^{-\pi i(s-k)}\sum_{b\in \mathbb Z}\left(az-\frac{n}{2a}+b\right)^{-(k-s)}.
\end{align*}
So use the same method of $I_1$ and apply Lipschitz's formula (\ref{Formula-2.5}) to the sum over $b$, and the contribution of $I_2$ to $c(m)$ is equal to
\begin{align*}
c(m)_2=&(-1)^{k/2}\frac{(2\pi)^{k-s}}{2^{k-s}\Gamma(k-s)}\sum_{n>0}n^{w-s-1}\sum_{a>0,a\mid (n,m)}a^s(m/a)^{k-s-1}e^{-\pi i nm/a^2}\\
=&(-1)^{k/2}\frac{(2\pi)^{k-s}}{2^{k-s}\Gamma(k-s)}\sum_{a\mid m}\sum_{n>0}(an)^{w-s-1}a^{2s-k+1}m^{k-s-1}e^{-2\pi i mn/(2a)}\\
=&(-1)^{k/2}\frac{(2\pi)^{k-s}}{2^{k-s}\Gamma(k-s)}m^{k-s-1}\sum_{a\mid m}a^{w+s-k}F(s-w+1, -m/(2a))\\
=&\frac{(-1)^{k/2}(2\pi)^{k-s}}{2^{k-s}\Gamma (k-s)}m^{k-s-1}\zeta(s-w+1)\sum_{2a|m}a^{s+w-k}\\
&+(-1)^{k/2}\frac{(2\pi)^{k-w}\Gamma(w-s)}{2^{k-s-1}\Gamma (k-s)}m^{k-s-1}\cos(\pi(w-s)/2)\zeta\left(w-s,1/2\right)\sum_{a\mid m, 2a\nmid m }a^{s+w-k}.
\end{align*}

(\textbf{3}) For the elements $\gamma=\left[ \begin{matrix}
   a_{\gamma} & b_{\gamma}  \\
   c_{\gamma} & d_{\gamma} \\
\end{matrix} \right] \in  M_ n $
with $(a_{\gamma}+c_{\gamma}/2)c_{\gamma}>0$, the corresponding sub-series in (\ref{Formula-2.4}) is given by
\begin{align*}
I_{3}=&\frac{e^{\pi i s/2}}{2^{1+s}}\sum_{n>0}n^{w-1}\sum_{a,b,c,d\in \mathbb Z, ad-bc=n,(a+c/2)c>0}\left(\frac{az+b}{cz+d}+\frac{1}{2}\right)^{-s}(cz+d)^{-k}.
\end{align*}
The set of integral matrices with determinant $n$ can be listed as follows:
\[
 M_{n}=\left\{{\left( \begin{matrix}
   ar & nb_0/r+(t+rl)a  \\
   cr & nd_0/r+(t+rl)c \\
\end{matrix} \right) : r|n,\ \textrm {gcd}(a,c)=1,\ t \in \mathbb Z / r \mathbb Z,\  l \in \mathbb Z }\right\},
\]
where for each pair $(a, c)$, $b_0, d_0$  are fixed so that $ad_0-b_0c=1$. Using the same method for $\text{III}$ of \cite{CHOIE2020}, we see that its contribution $c(m)_3$
to $c(m)$ is equal to
\begin{align*}
&\frac{e^{\pi i s/2}}{2^{1+s}}\sum_{n>0}n^{w-1}\sum_{(a+c/2)c>0, (a,c)=1, r|n}\sum_{t\in \mathbb Z/r\mathbb Z}\sum_{l\in \mathbb Z}\\
&\times\int_{iC}^{iC+1}\left(\frac{ra(z+l)+nb_0/r+ta }{rc(z+l)+nd_0/r+tc}+\frac{1}{2}\right)^{-s}(rc(z+l)+nd_0/r+tc)^{-k}e^{-2\pi i m(z+l)}dz\\
=&\frac{e^{\pi i s/2}}{2^{1+s}}\sum_{n>0}n^{w-1}\sum_{(a+c/2)c>0, (a,c)=1}\sum_{r|(m,n)}re^{2\pi i \frac{m}{r}\frac{n}{r}\frac{a'}{c}}\\
&\times\int_{iC-\infty}^{iC+\infty}\left(\frac{a}{c}-\frac{n}{c^2r^2z} +\frac{1}{2}\right)^{-s}
 (rcz)^{-k}e^{-2\pi i mz}dz,
\end{align*}
where we choose $0<a'\le |c|$ and $aa'-b_0c=1$, the shift by $l$ makes up the integral $\int_{iC-\infty}^{iC+\infty}$, and a change of variable $z\rightarrow z-(nd_0+trc)/(r^2c)$ gives the last expression. By change of variable
$z\to \frac{c}{a+c/2}iz$ and the integral representation of ${}_1F_1$, we have
\begin{align*}
c(m)_3=&\frac{e^{\pi i s/2}}{2^{1+s}}\sum_{n>0}n^{w-1}\sum_{(a+c/2)c>0, (a,c)=1}\sum_{r|(m,n)}r^{1-k}c^{-k}e^{2\pi i \frac{m}{r}\frac{n}{r}\frac{a'}{c}}\\
&\times\int_{iC-\infty}^{iC+\infty}\left(\left(\frac{a}{c}+\frac{1}{2}\right)z-\frac{n}{c^2r^2} \right)^{-s}z^{s-k}e^{-2\pi i mz}dz\\
=&e^{\pi i s/2}(-1)^{\frac{k}{2}}\frac{(2\pi)^{k} m^{k-1}}{2^{s}\Gamma(k)}\sum_{a+c/2>0, c>0, (a,c)=1 }c^{s-k}(a+c/2)^{-s}\sum_{n>0}n^{w-1}\\
&\times \sum_{r|m}r^{w-k}e^{2\pi i \frac{m}{r}\frac{na'}{c}}{{}_1{F}_1}\left(s,k;-\frac{2\pi i m n }{rc(a+c/2)}\right).
\end{align*}

(\textbf{4}) Finally the computation on terms with $(a+c/2)c < 0$ reduces to that of part (\textbf{3}), by noting that
\begin{small}
\begin{align*}
\left(\frac{a}{c}+\frac{1}{2}-\frac{n}{c^2r^2z}\right)^{-s}=e^{-\pi i s }z^{s}\left(-\left(\frac{a}{c}+\frac{1}{2}\right)z+\frac{n}{c^2r^2}\right)^{-s}.
\end{align*}
\end{small}
The contribution is equal to
\begin{align*}
{c(m)}_{4}
=&(-1)^{\frac{k}{2}}\frac{(2\pi)^{k} m^{k-1}}{e^{\pi i s/2 }2^{s}\Gamma(k)}\sum_{a+c/2>0,c>0, (a,c)=1}c^{s-k}(a+c/2)^{-s}\sum_{n>0}n^{w-1}\\
&\times \sum_{r|m}r^{w-k}e^{-2\pi i \frac{m}{r}\frac{na'}{c}}{{}_1{F}_1}\left(s,k;\frac{2\pi i m n }{rc(a+c/2)}\right).
\end{align*}
Putting together the formulas for $c(m)_1,c(m)_2,c(m)_3$ and $c(m)_4$ obtained above, we obtain the desired formula for $c(m)$.
\end{proof}

\begin{Cor}\label{Cor-2.2}
The following identity
\begin{align*}
&\frac{2^{2-s-w}\pi^{k+1-w}\Gamma(k-1)}{\Gamma(s)\Gamma(k-s)\Gamma(k-w)}\sum_{f\in {\mathcal B}_k}\frac{L^*(f,k-s;1/2)L^*(f,k-w)}{\langle f,f\rangle}\\
=&\frac{(2\pi)^{k-w}\Gamma(s+w-k)}{2^{s-1}\Gamma (s)}
\cos(\pi  (s+w-k)/2)\zeta(s+w-k,1/2)\\
&+(-1)^{\frac{k}{2}}\frac{(2\pi)^{k-w}\Gamma(w-s)}{2^{k-s-1}\Gamma (k-s)}
\cos(\pi  (w-s)/2)\zeta(w-s,1/2)\\
&+(-1)^{\frac{k}{2}}\frac{(2\pi)^{k} }{2^{s}\Gamma(s)\Gamma (k-s)}\sum_{a+c/2>0,c>0, (a,c)=1}c^{s-k}(a+c/2)^{-s}\sum_{n>0}n^{w-1}\\
&\times \left(e^{\pi i s/2}e^{2\pi i \frac{na'}{c}}{{}_1{f}_1}\left(s,k;-\frac{2\pi i n }{c(a+c/2)}\right)+e^{-\pi i s/2 }e^{-2\pi i \frac{na'}{c}}{{}_1{f}_1}\left(s,k;\frac{2\pi i  n }{c(a+c/2)}\right)\right)
\end{align*}
holds on $\mathcal D$ and the double sum in the last term of the right-hand side is absolutely convergent on $\mathcal D$.
\begin{proof}
The identity follows easily from Proposition \ref{Prop-2.1} and the formula (\ref{Formula-2.3}), and the absolute convergence on $\mathcal{D}$ follows from that of the series for $E^*_{s,k-s}(z,w;\frac{1}{2})$ (see Remark 4.4 of \cite{CHOIE2020} for the untwisted case).
\end{proof}
\end{Cor}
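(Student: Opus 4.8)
The plan is to read off the stated identity by comparing, in two different ways, the very first Fourier coefficient (the coefficient of $e^{2\pi i z}$) of $E_{s,k-s}^{*}(z,w;1/2)$ on $\mathcal D$, and to obtain the convergence claim as a free by-product of Proposition \ref{Prop-2.1}. The whole content is a specialization at $m=1$; there is no deep obstacle, only careful bookkeeping of which terms survive.

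First I would invoke the spectral expansion \eqref{Formula-2.3}. Since every $f\in\mathcal{B}_k$ is a \emph{normalized} Hecke eigenform, its first Fourier coefficient $a_f(1)$ equals $1$, so the coefficient of $e^{2\pi i z}$ on the right-hand side of \eqref{Formula-2.3} is exactly $\sum_{f\in\mathcal{B}_k}\frac{L^*(f,k-s;1/2)L^*(f,k-w)}{\langle f,f\rangle}$. On the other hand, Proposition \ref{Prop-2.1} expresses this same coefficient as $\frac{\Gamma(s)\Gamma(k-s)\Gamma(k-w)}{2^{2-s-w}\pi^{k+1-w}\Gamma(k-1)}\,c_1(s,w)$. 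Equating the two and multiplying through by $\frac{2^{2-s-w}\pi^{k+1-w}\Gamma(k-1)}{\Gamma(s)\Gamma(k-s)\Gamma(k-w)}$ reduces the corollary to the clean assertion that its left-hand side equals $c_1(s,w)$. The legitimacy of this coefficientwise comparison is guaranteed by the absolute and locally uniform convergence of the series throughout $\mathcal D$.

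It then remains to specialize the formula for $c(m)=c_m(s,w)$ at $m=1$, where the divisor conditions do all the work. The sums $\sum_{2a\mid 1}$ are empty because $2a\ge 2>1$, so the first two terms of $c(m)$ drop out; the condition $a\mid 1,\ 2a\nmid 1$ forces $a=1$, so the third and fourth terms survive with $a^{w-s}=a^{s+w-k}=1$; and in the final double sum $r\mid 1$ forces $r=1$, whence $m/r=1$ and $r^{w-k}=1$. Collecting what remains reproduces, term by term, the three groups on the right-hand side of the corollary.

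Finally, the asserted absolute convergence of the last double sum on $\mathcal D$ requires no separate argument: it is inherited from the absolute convergence of the full Fourier series for $E_{s,k-s}^{*}(z,w;1/2)$ established in Proposition \ref{Prop-2.1} (cf. Remark 4.4 of \cite{CHOIE2020} for the untwisted case), again specialized to $m=1$. If anything is delicate it is keeping the $m=1$ substitution honest across all five pieces of $c(m)$ simultaneously, but this is routine verification rather than a genuine obstacle.
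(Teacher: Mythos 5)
Your proposal is correct and follows essentially the same route as the paper: the identity is obtained by equating the first Fourier coefficient ($m=1$) of $E^*_{s,k-s}(z,w;1/2)$ computed via Proposition \ref{Prop-2.1} with that coming from the spectral expansion \eqref{Formula-2.3} (using $a_f(1)=1$ for normalized eigenforms), after which the divisor conditions $2a\mid 1$, $a\mid 1$, $r\mid 1$ kill or trivialize exactly the terms you indicate. The absolute convergence claim is likewise handled as in the paper, by inheriting it from the convergence of the series for $E^*_{s,k-s}(z,w;\tfrac{1}{2})$ established in Proposition \ref{Prop-2.1} (cf.\ Remark 4.4 of \cite{CHOIE2020}).
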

\section{Analytic continuation}

The left-hand side of the identity in Corollary \ref{Cor-2.2} is meromorphic on ${\mathbb C}^2$, while the right-hand side is only valid on $\mathcal D$. Set
\begin{align*}
\mathcal F &=\{(s, w)\in {\mathbb C}^2: 3/2<\text{Re}(s), \text{Re}(w)<k-2    \}.
\end{align*}
In the following proposition, we will find a new expression in zeta functions and gamma functions that is defined on $\mathcal{F}$ and equal to the right-hand side of Corollary \ref{Cor-2.2} on $\mathcal{F}\cap\mathcal{D}$, so we obtain a new formula for the left-hand side of Corollary \ref{Cor-2.2}.

\begin{Prop}\label{Prop-3.1} We have the following identity on $\mathcal F$
\begin{align*}
&\frac{2^{2-s-w}\pi^{k+1-w}\Gamma(k-1)}{\Gamma(s)\Gamma(k-s)\Gamma(k-w)}\sum_{f\in {\mathcal B}_k}\frac{L^*(f,k-s;1/2)L^*(f,k-w)}{\langle f,f\rangle}\\
=&\frac{(2\pi)^{k-w}\Gamma(s+w-k)}{2^{s-1}\Gamma (s)}
{\cos(\pi(s+w-k)/2)}\zeta(s+w-k,1/2)\\
&+(-1)^{\frac{k}{2}}\frac{(2\pi)^{k-w}\Gamma(w-s)}{2^{k-s-1}\Gamma (k-s)}\cos(\pi(w-s)/2)\zeta(w-s,1/2)\\
&+\frac{(2\pi)^{k-w}\Gamma (w)\Gamma(k-s-w)}{2^{k-s-1}\Gamma (k-s)\Gamma(k-w)}\cos(\pi  (k-s-w)/2)\zeta(k-s-w,1/2)\\
&+(-1)^{\frac{k}{2}}\frac{(2\pi)^{k-w}\Gamma (w) \Gamma(s-w)}{2^{s-1}\Gamma(s)\Gamma(k-w)}\cos(\pi  (s-w)/2)\zeta(s-w,1/2)\\
&+(-1)^{\frac{k}{2}}\frac{(2\pi)^{k-w}\Gamma (w)\Gamma(1-w) }{2^{k-s-1}\Gamma(s)\Gamma(k-s-w+1)}\cos(\pi  (s-w)/2)
{}_2F_1\left[
\begin{matrix}
1-s, & k-s \\
& k-s-w+1
\end{matrix} \bigg| \frac{1}{2}
\right]\\
& +(-1)^{\frac{k}{2}}\frac{(2\pi)^{k-w}\Gamma (w) \Gamma(1-w)}{2^{s-1}\Gamma (k-s)\Gamma(1+s-w)}\cos(\pi  (s+w)/2)
{}_2F_1\left[
\begin{matrix}
s+1-k, & s \\
  &  1+s-w
\end{matrix} \bigg| \frac{1}{2}
\right]\\
&+R(s, w),
\end{align*}
where $R(s,w)$ is holomorphic on $\mathcal F$ and bounded absolutely by a constant multiple of
\begin{align*}
&\frac{|\Gamma (w)| }{|\Gamma(s)\Gamma (k-s)|}e^{\pi (|\text{Im}(s)|+|\text{Im}(w)|)}\zeta(k-1-\max\{\text{Re}(s), \text{Re}(w)\}),
\end{align*}
and ${}_2{F}_1$ is the hypergeometric function that, when $\text{Re}(\gamma)>\text{Re}(\beta)>0,|z|<1$, is given by
\[
{}_2{F}_1\left[
\begin{matrix}
\alpha, & \beta \\
  &  \gamma
\end{matrix} \bigg| z
\right]=\frac{\Gamma(\gamma)}{\Gamma(\beta)\Gamma(\gamma-\beta)}\int_{0}^{1}\frac{t^{\beta-1}(1-t)^{\gamma-\beta-1}}{(1-zt)^{\alpha}}dt.
\]
\end{Prop}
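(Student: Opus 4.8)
The plan is to take the identity of Corollary \ref{Cor-2.2}, which holds on $\mathcal{D}$, and analytically continue its right-hand side to the larger region $\mathcal{F}$. The first two summands on the right-hand side of Corollary \ref{Cor-2.2} are products of gamma functions, a cosine, and a Hurwitz zeta function, hence meromorphic on all of ${\mathbb C}^2$; they reappear verbatim as the first two terms of the asserted formula, so nothing need be done with them. Consequently every genuinely new term, as well as the remainder $R(s,w)$, must arise from continuing the double sum over $(a,c)$ and $n$ in the last line of Corollary \ref{Cor-2.2}, whose defining series converges only when $\text{Re}(w)$ is small. The whole proof therefore reduces to producing a closed form for that double sum on $\mathcal{F}$.

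First I would insert the Euler integral representation ${}_1f_1(s,k;z)=\int_0^1 e^{zt}t^{s-1}(1-t)^{k-s-1}\,dt$, valid since $\text{Re}(k)>\text{Re}(s)>0$ on $\mathcal{F}$, into each of the two confluent hypergeometric factors, and interchange the $t$-integral with the absolutely convergent sum over $n$. Using $e^{\pm 2\pi i na'/c}e^{\mp 2\pi i nt/(c(a+c/2))}=e^{\pm 2\pi i n\theta}$ with $\theta=\tfrac{a'}{c}-\tfrac{t}{c(a+c/2)}$, the $n$-sum collapses to the periodic zeta function $F(1-w,\theta)=\sum_{n\ge 1}n^{w-1}e^{2\pi i n\theta}$ and its reflection $F(1-w,-\theta)$. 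Since $\theta$ is real, Lipschitz's formula \eqref{Formula-2.5} does not apply; instead I would invoke Hurwitz's formula (as in the proof of Proposition \ref{Prop-2.1}) to rewrite $F(1-w,\theta)$ as $\tfrac{\Gamma(w)}{(2\pi)^w}$ times a combination of $\zeta(w,\{\theta\})$ and $\zeta(w,1-\{\theta\})$ with coefficients $e^{\pm\pi i w/2}$. This step performs the continuation in $w$ and dictates the shape of the new terms: the factor $\Gamma(w)$ and the power $(2\pi)^{-w}$ appear, and the phases $e^{\pm\pi i w/2}$ combine with the $e^{\pm\pi i s/2}$ of the two conjugate pieces to produce precisely the cosines $\cos(\pi(s+w)/2)$, $\cos(\pi(s-w)/2)$, and $\cos(\pi(k-s-w)/2)$ seen in the statement.

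Next I would carry out the $t$-integration and then the summation over $(a,c)$. Expanding $\zeta(w,\{\theta\})=\sum_{m\ge 0}(m+\{\theta(t)\})^{-w}$ and integrating $t^{s-1}(1-t)^{k-s-1}$ against $(m+\{\theta(t)\})^{-w}$ over $[0,1]$, after an affine change of variable, yields Euler integrals of exactly the type that define ${}_2F_1$; here the argument $1/2$ tracks the half-integer translate $a+c/2$ of the twist, and the coefficient $\Gamma(w)\Gamma(1-w)$ arises from the Hurwitz $\Gamma(w)$ together with the normalizing $\Gamma(\gamma-\beta)=\Gamma(1-w)$ of the Euler formula. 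Summing the leading contributions over $c$ then collapses, via the coprimality condition and the $1/2$-shift, to the Hurwitz special values $\zeta(k-s-w,1/2)$ and $\zeta(s-w,1/2)$, exactly as half-integer arguments produced $\zeta(\,\cdot\,,1/2)$ in Proposition \ref{Prop-2.1}. This accounts for the four new explicit terms. Everything that remains, namely the tail over $m\ge 1$ together with the contributions of the single fractional-part jump of $\theta(t)$ on $[0,1]$, is absolutely convergent and holomorphic on $\mathcal{F}$ and is defined to be $R(s,w)$; its stated bound follows by estimating this tail, the factor $\Gamma(w)/(\Gamma(s)\Gamma(k-s))$ being inherited from Hurwitz's formula and the integral representation, the exponential $e^{\pi(|\text{Im}(s)|+|\text{Im}(w)|)}$ from the phases, and $\zeta(k-1-\max\{\text{Re}(s),\text{Re}(w)\})$ from the convergence rate of the surviving $(a,c)$- and $m$-sums.

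The hard part will be the bookkeeping in this last step. Because $\theta(t)=\tfrac{a'}{c}-\tfrac{t}{c(a+c/2)}$ is real and its fractional part can jump once as $t$ traverses $[0,1]$, one must track precisely which portion of each Hurwitz-zeta expansion feeds each closed-form term and which portion is absorbed into $R(s,w)$, while simultaneously justifying the interchange of summation and integration and the uniform applicability of Hurwitz's formula throughout the continuation. The $1/2$-twist makes this more delicate than the untwisted computation of \cite{CHOIE2020}, since the half-integer shift $a+c/2$ enters both the hypergeometric argument and the Hurwitz parameter, and it is exactly this shift that is responsible for the special values at $1/2$.
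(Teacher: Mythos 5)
Your architecture is exactly the paper's: insert the Euler integral for ${}_1f_1$, swap the $n$-sum with the $t$-integral, apply Hurwitz's formula to produce $\Gamma(w)(2\pi)^{-w}$ times $\zeta(w,\{\alpha\})$ and $\zeta(w,1-\{\alpha\})$ with the cosine phases, then peel off exceptional $(a,c)$ and bound the rest. However, one step fails as you state it: you justify the interchange by ``the absolutely convergent sum over $n$''. Absolute convergence of the outer series in Corollary \ref{Cor-2.2} (where the decaying factor ${}_1f_1(s,k;\pm 2\pi i n/(c(a+c/2)))$ is still present) does not license Fubini after the integral is opened; the interchanged inner sum is $\sum_{n\ge 1}n^{w-1}e^{2\pi i n\theta(t)}$ with $\theta(t)$ real, which converges absolutely only for $\text{Re}(w)<0$ — on $\mathcal{D}$, and a fortiori on $\mathcal{F}$ where $\text{Re}(w)>3/2$, its terms need not even tend to zero. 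The paper handles this by first restricting to $\mathcal{D}_1=\{2<\text{Re}(s)<k-2,\ \text{Re}(w)<0\}$, performing the interchange there, and then extending the resulting identity \eqref{Formula-3.1} to $\mathcal{D}$ by analyticity of both sides; it explicitly warns that the interchange is valid on $\mathcal{D}_1$ ``but not on $\mathcal{D}$!''. Your remark that Hurwitz's formula ``performs the continuation in $w$'' gestures at this, but without the restrict-then-continue maneuver the derivation of the key intermediate identity is unjustified.

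Second, what you defer as ``bookkeeping'' is the actual substance of the proposition, and your sketch omits its decisive arithmetic input. In the paper the explicit terms come from three completely specific families: $(a,c)=(0,1)$ is the \emph{sole} source of both ${}_2F_1$'s (via the $m=0$ terms $\{1-2u\}^{-w}$ and $\{2u\}^{-w}$, split at $u=1/2$, yielding Beta integrals and the Euler integrals at argument $1/2$); the family $a+c/2=1/2$, $c=2m+1$, $a'=2$ yields $\zeta(k-s-w,3/2)$; the family $c=1$, $a\ge 1$ yields $\zeta(s-w,3/2)$; and the identity $\zeta(\cdot,3/2)+2^{\cdot}=\zeta(\cdot,1/2)$ recombines these with the $(0,1)$ Beta terms into the stated $\zeta(\cdot,1/2)$ values. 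For the generic range $a+c/2\ge 1$, $c>1$, the holomorphy of $R(s,w)$ on $\mathcal{F}$ hinges on keeping $\{\alpha_{a,c}(u)\}$ away from $0$ uniformly: since $a'$ and $a+c/2$ cannot both equal $1$ (else $b_0=-1/2$), one gets $\{\alpha_{a,c}(u)\}\ge 1/(3c)$, and the estimate $(c-a')(a+c/2)\ge c/2-1$, coming from $aa'\equiv 1\pmod{c}$, controls the companion factor $\zeta(w,1-\{\alpha_{a,c}(u)\})$; these two bounds are what produce the factor $\zeta(k-1-\max\{\text{Re}(s),\text{Re}(w)\})$ in the stated estimate. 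Without lower bounds of this kind your claim that the leftover series is ``absolutely convergent and holomorphic on $\mathcal{F}$'' is unsupported: the Hurwitz values blow up as the fractional part approaches $0$, and nothing in your proposal rules that out. So the proposal has the right route but needs both the $\mathcal{D}_1$ continuation step and these fractional-part estimates to become a proof.
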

\begin{proof}
To carry out the proof, we only have to deal with the last term of the right-hand side of Corollary \ref{Cor-2.2}, namely
\begin{align*}
A(s,w):=&\frac{(-1)^{\frac{k}{2}}(2\pi)^{k} }{2^{s}\Gamma(s)\Gamma (k-s)}\sum_{a+\frac{c}{2}>0, c>0, (a,c)=1}c^{s-k}\left(a+\frac{c}{2}\right)^{-s}\sum_{n>0}n^{w-1}\\
\times&\left(e^{\pi i s/2}e^{2\pi i na'/c}{{}_1{f}_1}\left(s,k;-\frac{2\pi i n }{c(a+\frac{c}{2})}\right)+e^{-\pi i s/2 }e^{-2\pi i na'/c}{{}_1{f}_1}\left(s,k;\frac{2\pi i  n }{c(a+\frac{c}{2})}\right)\right).
\end{align*}
\textbf{Step I.} We open the integral of ${}_1f_1$, add up the sum on $n$ and utilize Hurwitz zeta function to obtain a new expression for $A(s,w)$ on $\mathcal{D}$. To take care of the analytical issue, set
\[{\mathcal D }_1 =\{(s,w)\in {\mathbb C}^2: 2<\text{Re}(s)<k-2, \ \ \ \text{Re}(w)<0\}.\]
Let $G_{a,c}(s,w):=G'_{a,c}(s,w)+G''_{a,c}(s,w)$, where
\begin{align*}
G'_{a,c}(s,w)=&\sum_{n>0}n^{w-1}e^{\pi i s/2}e^{2\pi i na'/c}{{}_1{f}_1}\left(s,k;-\frac{2\pi i n }{c(a+c/2)}\right),\\
G''_{a,c}(s,w)=&\sum_{n>0}n^{w-1}e^{-\pi i s/2 }e^{-2\pi i na'/c}{{}_1{f}_1}\left(s,k;\frac{2\pi i  n }{c(a+c/2)}\right).
\end{align*}
As a subseries of an absolutely and uniformly convergent series, the function $G'_{a,c}(s,w)$ is holomorphic on $\mathcal D$, so in particular $G_{a,c}(s,w)$ is holomorphic on the smaller region ${\mathcal D}_1$. For $Re(\beta)>Re(\alpha)>0$, it is known that \cite{1997Nonvanishing}
\[
{{}_1{f}_1}(\alpha, \beta; z)=\int_0^1e^{zu}u^{\alpha-1}(1-u)^{\beta-\alpha-1}du.
\]
On ${\mathcal D}_1$, we have
\begin{align*}
G'_{a,c}(s,w)=&\sum_{n>0}n^{w-1}e^{2\pi i na'/c}e^{\pi i s/2}{{}_1{f}_1}\left(s,k;-\frac{2\pi i n }{c(a+c/2)}\right)\\
=&e^{\pi i s/2}\int_0^1 u^{s-1}(1-u)^{k-s-1}\sum_{n>0}n^{w-1}e^{2\pi i na'/c}e^{-\frac{2\pi i n }{c(a+c/2)}u}du,
\end{align*}
where the interchange of summation and integration is justified because of absolute convergence on ${\mathcal D}_1$ (but not on $\mathcal{D}$!). On ${\mathcal D}_1$,
\begin{align*}
G'_{a,c}(s,w)=e^{\pi i s/2}\int_0^1 u^{s-1}(1-u)^{k-s-1}F\left(1-w,\frac{a'}{c}-\frac{u}{(a+c/2)c}\right)du.
\end{align*}
To switch to Hurwitz zeta function, we have to replace the parameter $\alpha$ in $F(s, \alpha)$ by $\{\alpha\}$. For $0<\{\alpha\}<1$, apply Hurwitz zeta function (see Formula 25.13.2 of \cite{Olver2010NIST}), we have
\begin{align*}
G'_{a,c}(s,w)=&(2\pi)^{-w}\Gamma (w)\int_0^{1} u^{s-1}(1-u)^{k-s-1}\\
&\times \left(e^{\pi i(s+w)/2}\zeta\left(w,\left\{\alpha_{a,c}(u)\right\}\right)+e^{\pi i (s-w)/2}\zeta\left(w,1-\left\{\alpha_{a,c}(u)\right\}\right)\right)du,
\end{align*}
where for ease of notation we set $\alpha_{a,c}(u)=\frac{a'}{c}-\frac{u}{(a+c/2)c}$.
Similarly,
\begin{align*}
G''_{a,c}(s,w)=&(2\pi)^{-w}\Gamma (w)\int_0^{1} u^{s-1}(1-u)^{k-s-1}\\
&\times\left(e^{-\pi i (s+w)/2}\zeta\left(w,\left\{\alpha_{a,c}(u)\right\}\right)+e^{-\pi i (s-w)/2}\zeta\left(w,1-\left\{\alpha_{a,c}(u)\right\}\right)\right)du.
\end{align*}
So we have, on ${\mathcal D}_1$
\begin{equation}\label{Formula-3.1}
\begin{aligned}
&G_{a,c}(s,w)\\
=&2(2\pi)^{-w}\Gamma (w)\int_0^{1} u^{s-1}(1-u)^{k-s-1}\\
&\times\left(\cos(\pi  (s+w)/2)\zeta\left(w,\left\{\alpha_{a,c}(u)\right\}\right)+\cos(\pi  (s-w)/2)\zeta\left(w,1-\left\{\alpha_{a,c}(u)\right\}\right)\right)du.
\end{aligned}
\end{equation}
Note that the right-hand side of (\ref{Formula-3.1}) is meromorphic on $\mathcal D$, forcing (\ref{Formula-3.1}) to hold on $\mathcal D$ as well since $G_{a,c}(s,w)$ is holomorphic on  $\mathcal D$.
Replacing the expression $G_{a,c}(s,w)$ in $A(s,w)$ with the right-hand side of (\ref{Formula-3.1}), the following equality
\begin{equation}\label{Formula-3.2}
\begin{aligned}
&A(s,w)\\
=&(-1)^{\frac{k}{2}}\frac{(2\pi)^{k-w}\Gamma (w) }{2^{s-1}\Gamma(s)\Gamma (k-s)}\sum_{a+c/2>0, c>0, (a,c)=1}c^{s-k}(a+c/2)^{-s}\int_0^{1} u^{s-1}(1-u)^{k-s-1}\\
&\times (\cos(\pi(s+w)/2))\zeta\left(w,\left\{\alpha_{a,c}(u)\right\}\right)+\cos(\pi  (s-w)/2)\zeta\left(w,1-\left\{\alpha_{a,c}(u)\right\}\right)du
\end{aligned}
\end{equation}
holds on $\mathcal D$.\\
\textbf{Step II.} Next we remove subseries from that of $A(s,w)$, which give functions meromorphic everywhere, and then prove that the remaining series is absolutely convergent on $\mathcal{F}$. Split the series of \eqref{Formula-3.2} into four subseries $A(s,w)_i$ and we treat them one by one.
\\
$(\textbf{1})$
For the terms with $c=1$ and $0<a+c/2<1$, obviously $c=1$ and $a=0$. We ignore the factors $(-1)^{\frac{k}{2}}\frac{(2\pi)^{k-w}\Gamma (w) }{2^{s-1}\Gamma(s)\Gamma (k-s)}$ for now and separate the first term in the Hurwitz zeta functions, so we have the following four terms
\begin{align*}
&2\int_0^{1} u^{s-1}(1-u)^{k-s-1}\left(\cos(\pi  (s+w)/2)(\{1-2u\})^{-w}\right)du\\
+&2\int_0^{1} u^{s-1}(1-u)^{k-s-1}\left(\cos(\pi  (s-w)/2)(\{2u\})^{-w}\right)du\\
+&2\int_0^{1} u^{s-1}(1-u)^{k-s-1}\left(\cos(\pi  (s+w)/2)\zeta(w,1+\{1-2u\})\right)du\\
+&2\int_0^{1} u^{s-1}(1-u)^{k-s-1}\left(\cos(\pi  (s-w)/2)\zeta(w,1+\{2u\})\right)du.
\end{align*}
The third and the fourth term are bounded absolutely by a constant multiple of
\[e^{\pi(|\text{Im}(s)|+|\text{Im}(w)|)},\]
hence giving holomorphic functions on $\mathcal F$. For the second term, we split the interval into two halves $[0,1/2]$ and $[1/2,1]$ to simplify $\{2u\}$. Then by writing
$\int_{0}^{1/2}=\int_{0}^{1}-\int_{1/2}^{1}=\int_{0}^{1}+R(s,w)_{1}'$
and a change of variable $v=2(1-u)$ for $\int_{1/2}^1$,  the second term is equal to
\begin{equation*}
\begin{aligned}
&\frac{\Gamma(s-w)\Gamma(k-s)}{2^{w}\Gamma(k-w)}\cos(\pi  (s-w)/2)+R(s,w)_{1}'\\
+&\frac{\Gamma(k-s)\Gamma(1-w)}{2^{k-s}\Gamma(k-s-w+1)}\cos(\pi  (s-w)/2){}_2F_1\left[
\begin{matrix}
1-s, & k-s \\
& k-s-w+1
\end{matrix} \bigg| \frac{1}{2}
\right],
\end{aligned}
\end{equation*}
where $R(s,w)_{1}'$ is a holomorphic function on $\mathcal F$ and bounded by $e^{\pi(|Im(w)|+|Im(s)|)}$ up to a constant.
The first term can be treated similarly and is equal to
\begin{equation*}
\begin{aligned}
&\frac{\Gamma(s)\Gamma(1-w)}{2^{s}\Gamma(1+s-w)}\cos(\pi  (s+w)/2){}_2F_1\left[
\begin{matrix}
s+1-k, & s \\
  &  1+s-w
\end{matrix} \bigg| \frac{1}{2}
\right]\\
+&\frac{\Gamma(s)\Gamma(k-s-w)}{2^{w}\Gamma(k-w)}\cos(\pi  (s+w)/2)+R(s,w)_{1}'',
\end{aligned}
\end{equation*}
where $R(s,w)_{1}''$ is a holomorphic function on $\mathcal F$ and bounded by $e^{\pi(|Im(w)|+|Im(s)|)}$ up to a constant.
Combining these formulas on the four terms, we have the full contribution of such terms
\begin{align*}
&A(s,w)_{1}\\
=&\frac{(2\pi)^{k-w}\Gamma (w) \Gamma(k-s-w)}{2^{w-1}\Gamma (k-s)\Gamma(k-w)}\cos(\pi (k-s-w)/2)\\
&+(-1)^{\frac{k}{2}}\frac{(2\pi)^{k-w}\Gamma (w) \Gamma(s-w)}{2^{w-1}\Gamma(s)\Gamma(k-w)}\cos(\pi  (s-w)/2)\\
&+\frac{(-1)^{\frac{k}{2}}(2\pi)^{k-w}\Gamma (w) \Gamma(1-w)}{2^{s-1}\Gamma (k-s)\Gamma(1+s-w)}\cos(\pi (s+w)/2){}_2F_1\left[
\begin{matrix}
s+1-k, & s \\
  &  1+s-w
\end{matrix} \bigg| \frac{1}{2}
\right]\\
&+\frac{(-1)^{\frac{k}{2}}(2\pi)^{k-w}\Gamma (w) \Gamma(1-w)}{2^{k-s-1}\Gamma(s)\Gamma(k-s-w+1)}\cos(\pi  (s-w)/2){}_2F_1\left[
\begin{matrix}
1-s, & k-s \\
& k-s-w+1
\end{matrix} \bigg| \frac{1}{2}
\right]\\
&+R(s,w)_{1},
\end{align*}
where $R(s,w)_{1}$ is a holomorphic function on $\mathcal F$ and is bounded absolutely by
\[
\frac{|\Gamma (w)| }{|\Gamma(s)\Gamma (k-s)|}e^{\pi(|\text{Im}(w)|+|\text{Im}(s)|)}\] up to a constant.
\\
$(\textbf{2})$ For the terms with  $ 0<a+c/2<1,  c>1 $, we have
\[
a+c/2=1/2,\quad c=1+2m,\quad a=-m,\quad m\ge1,\quad a'=2.
\]
And then $0\le\frac{a'}{c}-\frac{u}{(a+c/2)c}<1$, $1/3\le1-\frac{a'}{c}+\frac{u}{(a+c/2)c}<1$. A similar but easier treatment gives the corresponding contribution $A(s,w)_2$:
\begin{align*}
&A(s,w)_{2}\\
=&\frac{(2\pi)^{k-w}\Gamma (w)\Gamma(k-s-w)}{2^{k-s-1}\Gamma (k-s)\Gamma(k-w)}\cos(\pi (k-s-w)/2)\zeta(k-s-w,3/2)+R(s,w)_{2},
\end{align*}
where $R(s,w)_{2}$ is holomorphic on $\mathcal F$ and bounded absolutely by
$
\frac{|\Gamma (w)|}{|\Gamma(s)\Gamma (k-s)|}e^{\pi (|\text{Im} (s)|+|\text{Im}(w)|)}$ up to a constant.
\\
$(\textbf{3})$
Similarly, the terms with $c=1$ and $a+c/2\ge1$ contribute
\begin{align*}
&A(s,w)_{3}=(-1)^{\frac{k}{2}}\frac{(2\pi)^{k-w}\Gamma (w)\Gamma(s-w) }{2^{s-1}\Gamma(s)\Gamma(k-w)}\cos(\pi (s-w)/2)\zeta(s-w,3/2)+R(s,w)_{3},
\end{align*}
where $R(s,w)_{3}$ is holomorphic on $\mathcal F$ and bounded absolutely by $
\frac{|\Gamma (w)|}{|\Gamma(s)\Gamma (k-s)|}e^{\pi (|\text{Im} (s)|+|\text{Im}(w)|)}$ up to a constant.
\\
(\textbf {4})  We are left with the terms with $a+c/2 \ge 1, c>1$ and we prove that the corresponding contribution $A(s,w)_4=R(s,w)_4$ is itself holomorphic on $\mathcal{F}$. 

We first treat the terms with $\zeta(w,1-\frac{a'}{c}+\frac{u}{(a+c/2)c})$.
Note that for each pair $(a,c)$, we always have $0<\frac{a'}{c}-\frac{u}{(a+c/2)c}<1$, so for $\text{Re}(w)>3/2$
\begin{equation*}\label{For-3.1}
\begin{aligned}
&\left|\zeta\left(w,1-\frac{a'}{c}+\frac{u}{c(a+c/2)}\right)\right|\le \left|\left(1-\frac{a'}{c}+\frac{u}{c(a+c/2)}\right)^{-w}\right|+\zeta(\text{Re}(w))\\
&<\left(\frac{c-a'}{c}+\frac{u}{(a+c/2)c}\right)^{-\text{Re}(w)}+\zeta(3/2)\le \left(\frac{c}{c-a'}\right)^{\text{Re}(w)}+\zeta(3/2)\ll \left(\frac{c}{c-a'}\right)^{\text{Re}(w)}.
\end{aligned}
\end{equation*}
Then we split such pairs $(a,c)$ into two parts. For $1\le a+c/2< c$, note that $1\le a'\le c-1, 1\le a+c/2\le c-1/2, aa'\equiv 1\mod c$. Therefore $(c-a')(a+c/2)\ge c/2-1$. For $c\ge4 $, then $c/2-1\ge c/4$, so the series over such pairs $(a, c)$ is bounded absolutely by a constant multiple of
\begin{equation*}
\begin{aligned}
 &e^{\pi (|\text{Im}(s)|+|\text{Im}(w)|)}\sum_{c=2}^{\infty}\sum_{ \substack{1\le a+c/2< c\\ (a,c)=1 }}c^{-k+\text{Re}(s)+\text{Re}(w)}(a+c/2)^{-\text{Re}(s)}(c-a')^{-\text{Re}(w)}\\
  \ll&e^{\pi (|\text{Im}(s)|+|\text{Im}(w)|)}\sum_{c=4}^{\infty}\sum_{ \substack{1\le a+c/2< c\\ (a,c)=1 }} c^{-k+\text{Re}(s)+\text{Re}(w)}((a+c/2)(c-a'))^{-\min\{\text{Re}(s), \text{Re}(w)\}}\\
    \ll&4^{\min\{\text{Re}(s), \text{Re}(w)\}}e^{\pi (|\text{Im}(s)|+|\text{Im}(w)|)}\sum_{c=4}^{\infty}c\cdot c^{-k+\max\{\text{Re}(s), \text{Re}(w)\}}\\
     \ll&e^{\pi (|\text{Im}(s)|+|\text{Im}(w)|)}\zeta(k-1-\max\{\text{Re}(s), \text{Re}(w)\}).
\end{aligned}
\end{equation*}
For $a+c/2\ge c$, since $(\frac{c}{c-a'})^{\text{Re}(w)}<c^{\text{Re}(w)}$, the series over such pairs $(a, c)$ is absolutely bounded by $e^{\pi (|\text{Im}(s)|+|\text{Im}(w)|)}\zeta(k-\text{Re}(w)-1)$ up to a constant. Combining the above two bounds, the subseries in this case is bounded absolutely by a constant multiple of
\begin{align*}
&e^{\pi (|\text{Im}(s)|+|\text{Im}(w)|)}\zeta(k-1-\max\{\text{Re}(s), \text{Re}(w)\}),
\end{align*}
and it give a holomorphic function on $\mathcal F$.

Next we treat the terms with $\zeta(w,\frac{a'}{c}-\frac{u}{(a+c/2)c})$. Note first that $a'$ and $a+c/2$ cannot both be equal to $1$, since otherwise $aa'-b_0c=1=a-b_0c$ and $b_0=-1/2$ which is absurd. It follows that
 \[
 1>\frac{a'}{c}-\frac{u}{(a+c/2)c}\ge \frac{a'}{c}-\frac{1}{(a+c/2)c}\ge\frac{1}{c}\left(1-\frac{1}{3/2}\right)=\frac{1}{3c},
 \]
and with such a lower bound we can proceed similarly as above. Namely, split such pairs $(a,c)$ into two parts $a+c/2\ge c$ and $1\le a+c/2< c$, and the series with terms subject to $a+c/2\ge c$ is bounded absolutely by a constant multiple of
\begin{equation*}
e^{\pi (|\text{Im}(s)|+|\text{Im}(w)|)}\zeta(k-\text{Re}(w)-1)
\end{equation*} and that for $1\le a+c/2< c$ is bounded absolutely by a constant multiple of
\begin{equation*}
e^{\pi (|\text{Im}(s)|+|\text{Im}(w)|)}\zeta(k-1-\max\{\text{Re}(s), \text{Re}(w)\}).
\end{equation*}
So the series over such pairs $(a, c)$ are bounded absolutely by a constant multiple of
\begin{align*}
&e^{\pi (|\text{Im}(s)|+|\text{Im}(w)|)}\zeta(k-1-\max\{\text{Re}(s), \text{Re}(w)\}).
\end{align*}
Therefore $A(s,w)_{4}$ is itself a holomorphic function on $\mathcal F$ and it is bounded absolutely by a constant multiple of
\begin{align*}
&\frac{|\Gamma (w)| }{|\Gamma(s)\Gamma (k-s)|}e^{\pi (|\text{Im}(s)|+|\text{Im}(w)|)}\zeta(k-1-\max\{\text{Re}(s), \text{Re}(w)\}).
\end{align*}

Combining all of $A(s,w)_i$ and applying the equation $\zeta(s, 3/2)+2^{s}=\zeta(s,1/2)$, we obtain the main terms of $A(s,w)$. The bound of the remainder $R(s,w)$ is obvious by combining that of $R(s,w)_i$. So we end the proof.
\end{proof}
\section{The Rationality}
To prove the rationality, we recall the following formula on special values of the hypergeometric function ${}_2F_1$.  For integers $a,b,n$, we have
\begin{align}
\label{Formula-4.1}{}_2F_1\left[
\begin{matrix}
a, & b \\
& \frac{1}{2}(a+b+n+1)
\end{matrix} \bigg| \frac{1}{2}
\right]
=&\frac{\Gamma(\frac{1}{2})\Gamma(\frac{1}{2}(a+b+n+1))}{\Gamma(\frac{b}{2})\Gamma(\frac{1}{2}(b+1))}\frac{\Gamma(\frac{1}{2}(a-b-|n|+1))}{\Gamma(\frac{1}{2}(a-b+n+1))}
\\ &\times \sum_{r=0}^{|n|}\binom{|n|}{r} \frac{\delta(n,r)\Gamma(\frac{1}{2}(b+r))}{\Gamma(\frac{1}{2}(a-|n|+r+1))},\nonumber
\end{align}
where
\[
\delta(n,r):=\left\{\begin{matrix}
   (-1)^r & n\geq 0  \\
   1 & n<0  \\
\end{matrix}\right.. 
\]
Note that formula \eqref{Formula-4.1} combines Theorem 1 and Theorem 2 of \cite{2011Generalizations}.

For our purpose, we only need the first Fourier coefficient of the twisted double Eisenstein series.
\begin{Prop}\label{Prop-4.1} For integers $s, w$ of opposite parity with $(s, w)\in \mathcal F$,
\begin{align*}
&2(2\pi)^{w-k-1}c(1)\\
=&\frac{2^{s+w-k}-1}{2^{s}\Gamma(s)}
\rho(k-w-s+1)+(-1)^{\frac{k}{2}}\frac{2^{w-s}-1}{2^{k-s}\Gamma(k-s)}\rho(s-w+1)\\
&+\frac{(2^{k-s-w}-1)\Gamma(w)}{2^{k-s}\Gamma(k-s)\Gamma(k-w)}\rho(s+w-k+1)+(-1)^{\frac{k}{2}}\frac{(2^{s-w}-1)\Gamma(w)}{2^{s}\Gamma(s)\Gamma(k-w)}\rho(w-s+1)\\
&+
\frac{(-1)^{(|k-2w|+w- s-1)/2}\Gamma (w) }{2\Gamma(s)\Gamma(k-s)\Gamma((|k-2w|+k)/2)}\\
 &\quad\times  \sum_{r=0}^{|k-2w|}\delta(k-2w,r)\binom{|k-2w|}{r}\prod_{j=1}^{(k+|k-2w|-2)/2}(j+(-s-|k-2w|+r)/2)\\
& +
 \frac{(-1)^{(|k-2w|+w+s-1)/2}\Gamma (w)}{2\Gamma(s)\Gamma(k-s)\Gamma((|k-2w|+k)/2)}\\
 &\quad\times  \sum_{r=0}^{|k-2w|}\delta(k-2w,r)\binom{|k-2w|}{r}\prod_{j=1}^{(k+|k-2w|-2)/2}(j+(s-k-|k-2w|+r)/2),
\end{align*}
where
\[
\rho(2n):=\left\{\begin{matrix}
   (-1)^{n+1}B_{2n}/(2n)! & n\ge0,  \\
   0 & n<0.  \\
\end{matrix}\right. \ \ \ \ \ \
\]
\end{Prop}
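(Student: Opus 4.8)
The plan is to extract $c(1)$ from the analytically continued identity of Proposition~\ref{Prop-3.1} and then specialize the arguments to integers. First I would observe that $c(1)$ is exactly the normalized eigenform sum sitting on the left of Corollary~\ref{Cor-2.2} and Proposition~\ref{Prop-3.1}. Indeed, putting $m=1$ in Proposition~\ref{Prop-2.1} collapses every divisor sum ($\sum_{2a\mid 1}=0$, $\sum_{a\mid 1,\,2a\nmid 1}a^{w-s}=1$, $\sum_{r\mid 1}r^{w-k}=1$), so the coefficient of $e^{2\pi i z}$ in the Fourier expansion equals $\tfrac{\Gamma(s)\Gamma(k-s)\Gamma(k-w)}{2^{2-s-w}\pi^{k+1-w}\Gamma(k-1)}\,c(1)$; on the other hand (\ref{Formula-2.3}) gives the same coefficient as $\sum_{f}L^*(f,k-s;1/2)L^*(f,k-w)/\langle f,f\rangle$ since $a_f(1)=1$. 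Hence Proposition~\ref{Prop-3.1} reads $c(1)=(\mathrm{T}_1)+\cdots+(\mathrm{T}_6)+R(s,w)$ on $\mathcal F$, and the assertion is the evaluation of $2(2\pi)^{w-k-1}$ times this at an integer point $(s,w)\in\mathcal F$ of opposite parity, every expression being read as the limit from within $\mathcal F$.

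The governing observation is that, as $k$ is even and $s,w$ have opposite parity, each of $s\pm w$, $s+w-k$, $w-s$, $k-s-w$ is odd, so every factor $\cos(\pi(\,\cdot\,)/2)$ in Proposition~\ref{Prop-3.1} vanishes at the point in question. I would first use this to kill the remainder: inspecting the construction of $R(s,w)$, each of its summands is a cosine $\cos(\pi(s\pm w)/2)$ times a quantity finite at the integer point (the Hurwitz arguments being bounded away from $0$ and $1$ for the case-$(\mathbf 4)$ pairs, or $\ge 1$ elsewhere, hence finite at integers $w\ge 2$), so $R(s,w)=0$ there. For the four Hurwitz terms $\mathrm T_1,\dots,\mathrm T_4$ I would combine $\zeta(x,1/2)=(2^x-1)\zeta(x)$ with the asymmetric functional equation $\zeta(1-x)=2(2\pi)^{-x}\cos(\pi x/2)\Gamma(x)\zeta(x)$ and the Euler value $\zeta(2n)=\tfrac12(2\pi)^{2n}\rho(2n)$: since $1-x$ is an even integer, each $0\cdot\infty$ (a simple zero of the cosine against a simple pole of $\Gamma$) becomes a finite $\rho$-value, and after the normalization $2(2\pi)^{w-k-1}$ all powers of $2\pi$ cancel, producing precisely the four $\rho$-terms.

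The heart of the matter is the two hypergeometric terms. Matching ${}_2F_1(1-s,k-s;k-s-w+1;\tfrac12)$ and ${}_2F_1(s+1-k,s;1+s-w;\tfrac12)$ to (\ref{Formula-4.1}) forces $n=k-2w$ in both. Feeding (\ref{Formula-4.1}) in, its numerator factor $\Gamma(\tfrac12(a+b+n+1))=\Gamma(k-s-w+1)$ cancels the explicit $1/\Gamma(k-s-w+1)$, its denominator factor $\Gamma(\tfrac12(a-b+n+1))=\Gamma(1-w)$ cancels the prefactor $\Gamma(1-w)$, and the Legendre duplication $\Gamma(\tfrac{k-s}{2})\Gamma(\tfrac{k-s+1}{2})=2^{1-(k-s)}\sqrt\pi\,\Gamma(k-s)$ (respectively with $s$ in place of $k-s$) turns the remaining half-integer Gammas into $\Gamma(k-s)$ (respectively $\Gamma(s)$). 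I would then rewrite the surviving ratio $\Gamma(\tfrac{k-s+r}{2})/\Gamma(1+\tfrac{r-s-|k-2w|}{2})$ as the manifestly finite product $\prod_{j=1}^{(k+|k-2w|-2)/2}\bigl(j+\tfrac{-s-|k-2w|+r}{2}\bigr)$, which clears all spurious poles and produces the summand of the stated formula. The one genuinely delicate point---and where I expect the main difficulty---is the leftover factor $\cos(\pi(s-w)/2)\,\Gamma\bigl(1-\tfrac{k+|k-2w|}{2}\bigr)$, a vanishing cosine meeting a pole of $\Gamma$ at a negative integer: its value must be computed by a residue calculation, splitting $|k-2w|=\pm(k-2w)$ according to the sign of $k-2w$, to extract both the finite constant $\tfrac12/\Gamma(\tfrac{k+|k-2w|}{2})$ and the correct sign $(-1)^{(|k-2w|+w-s-1)/2}$ (respectively $(-1)^{(|k-2w|+w+s-1)/2}$ for the second term); a short parity check, using that $s+w$ is odd, reconciles the competing sign exponents. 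Assembling the four $\rho$-terms, the two product-terms, and $R=0$ yields the formula.
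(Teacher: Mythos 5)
Your proposal is correct and follows essentially the same route as the paper: identify $c(1)$ with the eigenform sum via \eqref{Formula-2.3} and Proposition \ref{Prop-3.1}, kill $R(s,w)$ because every one of its summands carries a factor $\cos(\pi(s\pm w)/2)$ multiplying a series that was shown absolutely convergent (hence finite) on $\mathcal F$, evaluate the four Hurwitz terms as $0\cdot\infty$ limits yielding the $\rho$-values, and reduce the two ${}_2F_1$ terms via \eqref{Formula-4.1} with $n=k-2w$, Legendre duplication, the ratio-to-product rewriting of $\Gamma((\beta+r)/2)/\Gamma((2-k+\beta-|k-2w|+r)/2)$, and the residue pairing of the vanishing cosine with the pole of $\Gamma((2-k-|k-2w|)/2)$, exactly as in the paper. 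Your only deviation is packaging the Hurwitz-term evaluation through the asymmetric functional equation $\zeta(1-x)=2(2\pi)^{-x}\cos(\pi x/2)\Gamma(x)\zeta(x)$ together with $\zeta(x,1/2)=(2^x-1)\zeta(x)$, which reproduces in one stroke the paper's three-case computation (residue at $x=1$, trivial zeros for odd $x>1$, $\Gamma$-residues against Bernoulli values for $x<0$) and is a cosmetic, not substantive, difference.
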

\begin{proof}
By \eqref{Formula-2.3}, Proposition \ref{Prop-3.1} and  \eqref{Formula-4.1}, we have
\[
c(1)=c(1)'+c(1)''+c(1)'''.
\]
where
\begin{align*}
c(1)'=&\frac{(2\pi)^{k-w}\Gamma(s+w-k)}{2^{s-1}\Gamma (s)}
{\cos(\pi(s+w-k)/2)}\zeta(s+w-k, 1/2)\\
&\quad+(-1)^{\frac{k}{2}}\frac{(2\pi)^{k-w}\Gamma(w-s)}{2^{k-s-1}\Gamma (k-s)}\cos(\pi(w-s)/2)\zeta(w-s, 1/2)\\
&\quad+\frac{(2\pi)^{k-w}\Gamma (w)\Gamma(k-s-w)}{2^{k-s-1}\Gamma (k-s)\Gamma(k-w)}\cos(\pi (k-s-w)/2)\zeta(k-s-w, 1/2)\\
&\quad+(-1)^{\frac{k}{2}}\frac{(2\pi)^{k-w}\Gamma (w)\Gamma(s-w) }{2^{s-1}\Gamma(s)\Gamma(k-w)}\cos(\pi (s-w)/2)\zeta(s-w, 1/2),
\end{align*}
\begin{align*}
c(1)''=&(-1)^{\frac{k}{2}}
\frac{(2\pi)^{k-w}\Gamma(1/2)\Gamma (w) \Gamma((2-k-|k-2w|)/2)\cos(\pi (s-w)/2)}{2^{k-s-1}\Gamma(s)\Gamma((k-s)/2)\Gamma((k-s+1)/2)}\\
 &\quad\times  \sum_{r=0}^{|k-2w|}\binom{|k-2w|}{r} \frac{\delta(k-2w,r)\Gamma((k-s+r)/2)}{\Gamma((2-s-|k-2w|+r)/2)} \\
& +(-1)^{\frac{k}{2}}
 \frac{(2\pi)^{k-w}\Gamma(1/2)\Gamma (w)\Gamma((2-k-|k-2w|)/2) \cos(\pi (s+w)/2)}{2^{s-1}\Gamma(k-s)\Gamma(s/2)\Gamma((s+1)/2)}\\
 &\quad\times  \sum_{r=0}^{|k-2w|}\binom{|k-2w|}{r} \frac{\delta(k-2w,r)\Gamma((s+r)/2)}{\Gamma((2-k+s-|k-2w|+r)/2)},
\end{align*}
and $c(1)'''=R(s,w)$.

For the term $c(1)'$, we have to compute the values of $ \Gamma(s)\zeta(s, 1/2)\cos(\pi s/2)$ at odd integers $s=m$. Recall that
\[\Gamma(s)\zeta(s, 1/2)\cos(\pi s/2)=(2^s-1)\Gamma(s)\zeta(s)\cos(\pi s/2).\] If $m=1$, by collecting the residue of $\zeta(s, 1/2)$ and the first Taylor coefficient of the rest at $s=1$, the value is equal to $-\pi/2$.
It is clear that if $m>1$, it has value $0$ since all factors are holomorphic at $m$.  Finally, if $m<0$, recall that
\[\textrm{Res}(\Gamma(\cdot), m)=\frac{(-1)^{-m}}{(-m)!},\quad \zeta(m)=(-1)^{-m}\frac{B_{1-m}}{1-m},\] where $B_{n}$ is $n$-th Bernoulli number. Therefore, the value we seek is
$(-1)^{\frac{m+1}{2}}\frac{(2^m-1)\pi B_{1-m}}{2(1-m)!}$ and the formula of $c(1)'$ in the statement follows.

For the term $c(1)''$, note that 
\begin{align*}
&\Gamma\left(\frac{2-k-|k-2w|}{2}\right)\cos\left(\pi\frac{w\pm s}{2} \right)=\frac{(-1)^{(|k-2w|+k+w\pm s-1)/2}}{\Gamma((|k-2w|+k)/2)}\frac{\pi}{2},
\end{align*}
so the formula of $c(1)''$ in the statement follows from the following identities for $\beta=s$ and for $\beta=k-s$:
\begin{align*}
\Gamma(\beta/2)\Gamma((\beta+1)/2)&=2^{1-\beta}\Gamma(1/2)\Gamma(\beta),\\
\frac{\Gamma((\beta+r)/2)}{\Gamma((2-k+\beta-|k-2w|+r)/2)}&=\prod_{j=1}^{(k+|k-2w|-2)/2}(j+(\beta-k-|k-2w|+r)/2).
\end{align*}

For the last term $c(1)'''$, by the proof of Proposition \ref{Prop-3.1}, where we actually proved that the resulting subseries in $(a,c)$ with $\cos(\pi(s+w)/2)$ and $\cos(\pi(s-w)/2)$ removed are absolutely convergent. Then plug in the values $w,s$ and we see that $c(1)'''=0$ because
\[
\cos(\pi (s+w)/2)=0, \ \ \cos(\pi(s-w)/2)=0.
\]
This completes the proof.
\end{proof}

Now we are ready to prove the following formula on the Petersson inner product of the Cohen kernels, from which our main theorem in the introduction follows trivially when $(s,w)\in\mathcal{F}$. If $s$ or $w=k-2$, by applying the functional equation $(s,w)\rightarrow (k-s,w)$ or $(s,w)\rightarrow (s,k-w)$ to send the point into $\mathcal{F}$, the same rationality holds.

\begin{Thm} \label{Thm2.3} For integers $s, w$ of opposite parity with $(s, w)\in \mathcal F$,
\begin{align}\label{Formula-4.2}
\nonumber&\pi^{-2} e^{\pi i (s-w)/2}\frac{2^{k-2}\Gamma(w)}{2^{s}\Gamma(k-1)}\langle{\mathcal{C}}_{k}(z, s; 1/2),{\mathcal{C}}_{k}(z, w)\rangle\\
\nonumber=&\frac{2^{s+w-k}-1}{2^{s}\Gamma(s)}\rho(k-w-s+1)+(-1)^{\frac{k}{2}}\frac{(2^{w-s}-1)}{2^{k-s}\Gamma(k-s)}\rho(s-w+1)\\
\nonumber&+\frac{(2^{k-s-w}-1)\Gamma(w)}{2^{k-s}\Gamma(k-s)\Gamma(k-w)}\rho(s+w-k+1)+(-1)^{\frac{k}{2}}\frac{(2^{s-w}-1)\Gamma(w)}{2^{s}\Gamma(s)\Gamma(k-w)}\rho(w-s+1)\\
&+
\frac{(-1)^{(|k-2w|+w- s-1)/2}\Gamma (w) }{2\Gamma(s)\Gamma(k-s)\Gamma((|k-2w|+k)/2)}\\
 \nonumber&\quad\times  \sum_{r=0}^{|k-2w|}\delta(k-2w,r)\binom{|k-2w|}{r}\prod_{j=1}^{(k+|k-2w|-2)/2}(j+(-s-|k-2w|+r)/2)\\
\nonumber& +
 \frac{(-1)^{(|k-2w|+w+s-1)/2}\Gamma (w)}{2\Gamma(s)\Gamma(k-s)\Gamma((|k-2w|+k)/2)}\\
 \nonumber&\quad\times  \sum_{r=0}^{|k-2w|}\delta(k-2w,r)\binom{|k-2w|}{r}\prod_{j=1}^{(k+|k-2w|-2)/2}(j+(s-k-|k-2w|+r)/2).
 \end{align}
\end{Thm}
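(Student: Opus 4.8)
The plan is to show that the normalized inner product on the left-hand side of \eqref{Formula-4.2} is exactly $2(2\pi)^{w-k-1}c(1)$, the quantity evaluated in Proposition \ref{Prop-4.1}. The statement then follows at once, because the right-hand side of \eqref{Formula-4.2} is literally the closed form produced there. Thus the whole content of the theorem is the single identity
\[
\pi^{-2} e^{\pi i (s-w)/2}\frac{2^{k-2}\Gamma(w)}{2^{s}\Gamma(k-1)}\langle{\mathcal{C}}_{k}(z, s; 1/2),{\mathcal{C}}_{k}(z, w)\rangle = 2(2\pi)^{w-k-1}c(1),
\]
which ties the Petersson inner product of the two Cohen kernels to the first Fourier coefficient of $E_{s,k-s}^{*}(z,w;1/2)$.

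To produce this identity I would first expand the inner product spectrally over the Hecke basis $\mathcal B_k$: writing each Cohen kernel through its eigenform coefficients and using the orthogonality of distinct $f\in\mathcal B_k$,
\[
\langle{\mathcal{C}}_{k}(\cdot, s; 1/2),{\mathcal{C}}_{k}(\cdot, w)\rangle=\sum_{f\in\mathcal B_k}\frac{\langle{\mathcal{C}}_{k}(\cdot, s; 1/2),f\rangle\,\overline{\langle{\mathcal{C}}_{k}(\cdot, w),f\rangle}}{\langle f,f\rangle}.
\]
Each factor is evaluated by the defining reproducing property of the kernels: since ${\mathcal{C}}_k(z,s)$ is the stated explicit multiple of ${\mathcal D}_k(z,s)$ and $\langle{\mathcal D}_k(\cdot,s),f\rangle=L^*(\overline f,s)$, together with the twisted counterpart for the $1/2$-twist, each inner product becomes a completed (twisted) $L$-value times the explicit elementary factor appearing in the definition of ${\mathcal C}_k(z,s)$. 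Because $f$ has real Hecke eigenvalues, the conjugations act only on these elementary factors, producing the phase $e^{\pi i(w-s)/2}$ that the prefactor $e^{\pi i(s-w)/2}$ in \eqref{Formula-4.2} is designed to cancel. After matching, the surviving sum is $\sum_{f}L^*(f,k-s;1/2)L^*(f,k-w)/\langle f,f\rangle$, which by \eqref{Formula-2.3} and Proposition \ref{Prop-2.1} equals $\frac{\Gamma(s)\Gamma(k-s)\Gamma(k-w)}{2^{2-s-w}\pi^{k+1-w}\Gamma(k-1)}c(1)$; this is precisely the bridge to the first Fourier coefficient.

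The delicate part — and the step I expect to be the main obstacle — is the exact bookkeeping of the constants, in particular the powers of $2$. Passing from the arguments $(s,w)$ that arise naturally from the two Cohen kernels to the arguments $(k-s,k-w)$ appearing in \eqref{Formula-2.3} requires two functional equations: the ordinary one $L^*(f,k-w)=(-1)^{k/2}L^*(f,w)$, and the twisted one obtained by specializing the second functional equation of $E_{s,k-s}^{*}$ to $q=2$, namely $2^{s}E_{k-s,s}^{*}(z,w;1/2)=(-1)^{k/2}2^{k-s}E_{s,k-s}^{*}(z,w;1/2)$ (here I use $E_{s,k-s}^{*}(\cdot;-1/2)=E_{s,k-s}^{*}(\cdot;1/2)$, since the twists by $\pm 1/2$ induce the same character $(-1)^n$). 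These contribute $q$-powers that must be reconciled against the explicit factor $\frac{2^{k-2}}{2^{s}}$ and against the $2^{2-s-w}$ from Proposition \ref{Prop-2.1}. I would carry out this reconciliation carefully so that all gamma factors and all $\pi$-powers collapse and the whole combination reduces exactly to $2(2\pi)^{w-k-1}c(1)$.

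Finally, I would invoke Proposition \ref{Prop-4.1} to replace $2(2\pi)^{w-k-1}c(1)$ by its closed form. Here the hypothesis that $s$ and $w$ have opposite parity is essential: it forces $\cos(\pi(s+w)/2)=\cos(\pi(s-w)/2)=0$, so the remainder term $c(1)'''=R(s,w)$ vanishes and only the explicit $\rho$- and hypergeometric-type terms survive, reproducing the right-hand side of \eqref{Formula-4.2} verbatim. The restriction $(s,w)\in\mathcal F$ is exactly the region on which Proposition \ref{Prop-3.1}, and hence the evaluation of $c(1)$, is valid; the boundary cases $s$ or $w=k-2$ are then handled, as noted before the statement, by a functional equation that moves the point into $\mathcal F$.
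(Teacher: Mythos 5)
Your proposal is correct and takes essentially the same route as the paper: both reduce the theorem to the single identity $\pi^{-2}e^{\pi i(s-w)/2}\frac{2^{k-2}\Gamma(w)}{2^{s}\Gamma(k-1)}\langle\mathcal{C}_{k}(\cdot,s;1/2),\mathcal{C}_{k}(\cdot,w)\rangle=2(2\pi)^{w-k-1}c(1)$ by expanding both kernels over $\mathcal{B}_k$, recognizing $\sum_{f}L^*(f,k-s;1/2)L^*(f,k-w)/\langle f,f\rangle$ as the first Fourier coefficient of $E^{*}_{s,k-s}(z,w;1/2)$ via \eqref{Formula-2.3} and Proposition \ref{Prop-2.1}, and then invoking Proposition \ref{Prop-4.1} (where the opposite-parity hypothesis kills $c(1)'''=R(s,w)$). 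The only deviation is your anticipated functional-equation bookkeeping in the third paragraph, which is unnecessary: the paper simply quotes equation (3-3) of \cite{2013Kernels}, which already gives $\langle\mathcal{C}_{k}(\cdot,s;1/2),f\rangle$ and $\langle\mathcal{C}_{k}(\cdot,w),f\rangle$ as explicit elementary factors times $L^*(f,k-s;1/2)$ and $L^*(f,k-w)$, so the arguments match \eqref{Formula-2.3} directly and no passage from $(s,w)$ to $(k-s,k-w)$, nor any functional equation of $E^{*}$, is needed.
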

\begin{proof}
When $(s, w)\in \mathcal F$ and $f\in {\mathcal B}_k$, by equation (3-3) of \cite{2013Kernels}, we have
\begin{align*}
\langle {\mathcal{C}}_{k}(\cdot, s; 1/2), f\rangle  &=2^{2-k}\pi e^{-\pi i s/2}\frac{\Gamma(k-1)}{\Gamma(s)\Gamma(k-s)}L^*(f, k-s;1/2),\\
\langle {\mathcal{C}}_{k}(\cdot, w; 0), f\rangle  &=2^{2-k}\pi e^{-\pi i w/2}\frac{\Gamma(k-1)}{\Gamma(w)\Gamma(k-w)}L^*(f, k-w).
\end{align*}
Therefore
 \begin{align*}
{\mathcal{C}}_{k}(z, s; 1/2)&=2^{2-k}\pi e^{-\pi i s/2}\frac{\Gamma(k-1)}{\Gamma(s)\Gamma(k-s)}\sum_{f\in{\mathcal{B}}_{k}}L^*(f, k-s;1/2)\langle f,f\rangle  ^{-1}f(z),\\
{\mathcal{C}}_{k}(z, w)&=2^{2-k}\pi e^{-\pi i w/2}\frac{\Gamma(k-1)}{\Gamma(w)\Gamma(k-w)}\sum_{f\in{\mathcal{B}}_{k}}L^*(f, k-w)\langle g,g\rangle  ^{-1}g(z).
\end{align*}
Then by \eqref{Formula-2.3} we have
 \begin{equation}\label{Formula-4.3}
 \begin{aligned}
&\langle {\mathcal{C}}_{k}(z, s; 1/2),{\mathcal{C}}_{k}(z, \overline{w})\rangle  \\
=&\frac{2^{2(2-k)}\pi^2 e^{\pi i (w-s)/2}\Gamma(k-1)^2}{\Gamma(s)\Gamma(k-s)\Gamma(w)\Gamma(k-w)}
\sum_{f\in{\mathcal{B}}_{k}}L^*(f, k-s;1/2)L^*(f, k-w)\langle f,f\rangle  ^{-1}\\
=&\frac{2^{2(2-k)}\pi^2 e^{\pi i (w-s)/2}\Gamma(k-1)^2}{\Gamma(s)\Gamma(k-s)\Gamma(w)\Gamma(k-w)}\cdot \frac{\Gamma(s)\Gamma(k-s)\Gamma(k-w)}{2^{2-s-w}\pi^{k+1-w}\Gamma(k-1)}c(1).
 \end{aligned}
\end{equation}
For integers $s,w$ of opposite parity in $\mathcal{F}$, by Proposition \ref{Prop-4.1}, we obtain the formula (\ref{Formula-4.2}). 
\end{proof}

\begin{Rmk} For general rational parameters $\frac{p}{q}$, we can also compute the Fourier coefficients of $E_{s,k-s}^{*}(z,w; \frac{p}{q})$ and obtain their analytic continuation along the same lines without much trouble. However, we lack information on ${}_2F_1\left[
\begin{matrix}
a, & b \\
  &  c
\end{matrix} \bigg| \frac{p}{q}
\right]$  for $q\neq 1,2$ and Theorem \ref{Thm2.3} is the best we can do at present time.
\end{Rmk}

 \bibliographystyle{amsplain}
\bibliography{paper}

\end{document}